\documentclass[10pt]{amsart}
\usepackage{amsmath,amssymb,amsthm,graphicx,a4wide}


\newtheorem{theorem}{Theorem}    
\newtheorem{lemma}{Lemma} 
 
\newtheorem{proposition}{Proposition} 
 
\newtheorem{corollary}{Corollary}
\theoremstyle{definition}
\newtheorem{definition}[theorem]{Definition}

\newcommand{\Z}{\mathbb{Z}}


\title[Kernel of braid representation and knot polynomials]{A kernel of a braid group representation yields a knot with trivial knot polynomials}
\author{Tetsuya Ito}
\address{Research Institute for Mathematical Sciences, Kyoto university
Kyoto, 606-8502, Japan}
\email{tetitoh@kurims.kyoto-u.ac.jp}
\urladdr{http://www.kurims.kyoto-u.ac.jp/~tetitoh/}
\keywords{Knots, Dehornoy ordering, Quantum invariants}
\thanks{This research was partially supported by JSPS Research Grant-in-Aid for Research Activity Start-up. }

\begin{document}

\begin{abstract}
We show that a non-trivial, non-central normal subgroup of the braid groups contains a braid whose closure is a hyperbolic knot with arbitrary large genus. This shows that non-faithfulness of a quantum representation implies that the corresponding quantum invariant fails to detect the unknot. The proof utilizes the Dehornoy ordering of the braid groups.
\end{abstract}
\maketitle

\section{Introduction}

The problem whether the Jones polynomial and various other knot polynomials (HOMFLY, Kauffman, colored Jones, etc...) detect the unknot or not is one of the central open problem in knot theory.

These polynomial knot invariants are the typical and fundamental examples of  \emph{quantum invariants}, which are obtained in the following manner:
Let $V$ be a finite dimensional $U_{q}(\mathfrak{g})$-module, where $U_{q}(\mathfrak{g})$ denotes the quantum enveloping algebra of a complex semi-simple Lie algebra $\mathfrak{g}$. $U_{q}(\mathfrak{g})$ is a ribbon Hopf algebra so one has a linear representation of the braid group $\rho_V: B_n \rightarrow \textrm{GL}(V^{\otimes n})$. 
Let $K$ be an oriented knot in $S^{3}$ represented as a closure of an $n$-braid $\beta$. By taking a variant of trace of $\rho_V(\beta)$ called a quantum trace, we obtain an invariant $Q^{V}(K)$ of knot $K$, called a {\em quantum $V$-invariant}.
See \cite{oht} for details and basics of the quantum invariants.

As this construction suggests, a non-trivial kernel element of a quantum representation seems to produce a non-trivial knot with trivial quantum invariants. In \cite{big}, Bigelow pointed out this simple observation together with several reasonably-sounding conjectures, and proposed an approach toward the unknot detection problem of quantum invariants, in particular Jones polynomial, by analysing the faithfulness problem of quantum representations.

In this paper, we prove (a version of) Bigelow's conjecture in a slightly stronger form. 
\begin{theorem}
\label{theorem:main}
\cite[Conjecture 3.2]{big}
Let $H$ be a non-trivial, non-central normal subgroup of $B_{n}$ and let $\alpha$ be an $n$-braid whose closure is the unknot.
Then for any $N>0$, there exists $\beta \in K$ such that the closure of $\alpha\beta$ is a hyperbolic knot whose genus is greater than or equal to $N$ and whose braid index is $n$. 
\end{theorem}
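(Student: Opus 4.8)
The plan is to use the Dehornoy ordering to convert the purely group-theoretic hypotheses on $H$ into geometric complexity of the closure. Write $\preceq$ for the Dehornoy ordering and $\lfloor\cdot\rfloor$ for the associated floor, normalised so that $\lfloor\beta\rfloor=m$ means $\Delta^{2m}\preceq\beta\prec\Delta^{2m+2}$, where $\Delta^{2}$ generates the centre. The floor is almost additive, $\lfloor\gamma\rfloor+\lfloor\delta\rfloor\le\lfloor\gamma\delta\rfloor\le\lfloor\gamma\rfloor+\lfloor\delta\rfloor+1$, so absorbing the fixed braid $\alpha$ shifts the floor only by a bounded amount; it therefore suffices to locate $\beta\in H$ with $\lfloor\beta\rfloor$ as large as we please. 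I would then feed a large floor into three consequences in turn: the closure has braid index exactly $n$ (no destabilisation is possible, via the Birman--Menasco machinery), the Bennequin surface of $\beta$ is essential and its genus grows with the floor, and the closure is fibered with monodromy carried by the mapping class of $\beta$.

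First I would reduce to pure braids. Since $P_{n}$ is normal of finite index, $H\cap P_{n}$ is normal of finite index in $H$; it is again non-central, for otherwise $H$ would be virtually cyclic, and as $B_{n}$ is torsion-free this forces $H\cong\Z$, whose generator would then have a centraliser of index at most $2$ and hence be central, contradicting the choice of a non-central element of $H$. Replacing $H$ by $H\cap P_{n}$, every $\beta\in H$ has trivial underlying permutation, so $\alpha\beta$ induces the same $n$-cycle as $\alpha$ and its closure is a knot of braid index at most $n$.

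Next I would produce elements of $H$ of arbitrarily large floor that are in addition pseudo-Anosov. Cofinality of non-central normal subgroups in the Dehornoy ordering (in the style of Malyutin), combined with almost additivity, yields $\gamma\in H$ with $\lfloor\gamma\rfloor\ge 1$, whence $\lfloor\gamma^{k}\rfloor\ge k$ and $\lfloor\alpha\gamma^{k}\rfloor\to\infty$. Because a non-central normal subgroup cannot consist solely of periodic and reducible braids, I may take $\gamma$ pseudo-Anosov; then $\gamma^{k}$, and by a source--sink dynamics argument on projective measured laminations also $\alpha\gamma^{k}$ for large $k$, remains pseudo-Anosov. With $\beta=\gamma^{k}$ and $k$ large, the genus of $\widehat{\alpha\beta}$ equals that of the Bennequin surface, namely $\tfrac12\bigl(k\,c(\gamma)-n+1\bigr)$ up to the bounded contribution of $\alpha$, where $c(\gamma)$ is the length of a fixed word for $\gamma$; this exceeds $N$ once $k$ is large.

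The main obstacle is hyperbolicity, and this is where the pseudo-Anosov hypothesis is spent. A large floor makes $\widehat{\alpha\beta}$ fibered with the Bennequin surface as fiber, and under the fibration the geometric monodromy is the mapping class of $\alpha\beta$; Thurston's hyperbolisation of fibered three-manifolds then makes the complement hyperbolic exactly when that monodromy is pseudo-Anosov, which simultaneously excludes the torus-knot case (periodic monodromy) and the satellite case (reducible monodromy). The points I expect to be genuinely delicate are three: extracting, from normality and non-centrality alone, a pseudo-Anosov element of positive floor inside an arbitrary $H$; confirming that the large-floor hypothesis really does force both fiberedness with the expected fiber and the minimality needed for the genus estimate; and arranging a single exponent $k$ for which $\alpha\gamma^{k}$ is at once pseudo-Anosov and of sufficiently large floor, so that hyperbolicity, the genus bound, and braid index $n$ all hold for the \emph{same} $\beta=\gamma^{k}$.
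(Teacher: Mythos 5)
Your overall skeleton is the same as the paper's: reduce to pure braids, use cofinality of the non-central normal subgroup $H$ in the Dehornoy ordering to find an element far above $\alpha^{-1}\Delta^{2N}$, preserve pseudo-Anosovness of the product by a power/north--south dynamics argument (Papadopoulos), and then convert ``large Dehornoy floor'' into genus, braid index and hyperbolicity of the closure. The genuine gap is in that last conversion, which is exactly the hard part. The paper does not derive these geometric consequences from scratch: it quotes three prior theorems (Theorem \ref{theorem:i}, from \cite{i1,i2}, proved with Birman--Menasco/braid foliation techniques): $\beta >_{D} \Delta^{2N}$ forces $g(\widehat{\beta})\geq N$; $\beta >_{D} \Delta^{2r(n)}$ forces braid index $n$; and $\beta >_{D} \Delta^{4}$ plus pseudo-Anosov forces hyperbolicity. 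Your substitutes for these are unjustified and in places false. The genus of a closed braid is in general strictly smaller than the genus of a Bennequin surface; equality is a theorem only under positivity-type hypotheses, and an element of an arbitrary normal subgroup carries no such structure, so $\tfrac{1}{2}\bigl(k\,c(\gamma)-n+1\bigr)$ does not bound $g(\widehat{\alpha\gamma^{k}})$ from below. Likewise there is no theorem that a large Dehornoy floor makes the closure fibered with the Bennequin surface as fiber, and your argument needs this twice.

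Most seriously, the hyperbolicity step confuses two different fibrations. The braid $\alpha\beta$ is a mapping class of the $n$-punctured disk; its mapping torus is the complement of the link $\widehat{\alpha\beta}\cup A$, where $A$ is the braid axis, \emph{not} the complement of the knot $\widehat{\alpha\beta}$. Thurston's hyperbolization applied to a pseudo-Anosov braid therefore gives hyperbolicity of the knot-plus-axis complement, which says nothing directly about the knot: pseudo-Anosov braids can close to unknots, torus knots and satellites. Even when the knot complement does fiber, its monodromy acts on a Seifert surface and is not the braid's mapping class. Bridging this gap --- showing that floor at least $2$ together with pseudo-Anosov rules out the non-hyperbolic possibilities once the axis is forgotten --- is precisely the content of \cite[Theorem 1.3]{i1}, and it requires a genuine argument, not formal hyperbolization. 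A secondary gap: the cofinality statement you wave at ``in the style of Malyutin'' (a pseudo-Anosov element of $H$ above any prescribed braid) is itself the paper's Theorem \ref{theorem:unbounded}, whose proof via alternating decompositions occupies the second half of the paper; it cannot be cited as folklore. As written, your proposal reproduces the paper's outline but leaves unproved, or replaces by false claims, each of the three theorems that make the outline work.
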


As a corollary, we have the following implications of the non-faithfulness of a quantum representation.

\begin{corollary}
\label{cor:trivial}
Let $\rho_{V}:B_{n} \rightarrow \textrm{GL}(V^{\otimes n})$ be a quantum representation associated with a finite dimensional $U_{q}(\mathfrak{g})$-module $V$. If $\rho_{V}$ is not faithful for some $n$, then for any knot $K$, there exist infinitely many mutually different (hyperbolic) knots $K_{i}$ $(i=1,2,\ldots)$ such that the quantum $V$-invariant of $K_{i}$ and $K$ are the same for all $i$.
\end{corollary}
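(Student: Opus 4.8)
The plan is to invoke Theorem~\ref{theorem:main} with the normal subgroup $H=\ker\rho_V$, but applied to a braid whose closure is $K$ rather than the unknot, and then to check that right–multiplication by a kernel element changes neither the operator $\rho_V(\alpha)$ nor, after a small adjustment, the ambient–isotopy invariant built from it. Throughout I write $e:B_m\to\Z$ for the exponent sum and $v$ for the ribbon twist scalar of $V$, so that the honest knot invariant has the schematic shape $Q^V(\widehat{\gamma})=v^{-e(\gamma)}\,\mathrm{qtr}\,\rho_V(\gamma)$.

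First I would arrange the hypotheses of the theorem in a single braid group. By assumption $\ker\rho_V$ is non-trivial on $B_n$ for some $n\ge 2$. Under the inclusion $B_n\hookrightarrow B_m$ on the first $n$ strands one has $\rho_V(\beta)=\rho_V^{(n)}(\beta)\otimes\mathrm{id}_{V^{\otimes(m-n)}}$ for $\beta\in B_n$, so a non-trivial $w\in\ker\rho_V^{(n)}$ remains in $H_m:=\ker\rho_V$ for every $m\ge n$; thus $H_m$ is a non-trivial normal subgroup of $B_m$. Choosing $m\ge\max(n+1,\,b(K))$, where $b(K)$ is the braid index of $K$, the element $w$ lies in the parabolic subgroup $B_{m-1}\subset B_m$; since $Z(B_m)=\langle\Delta_m^2\rangle$ meets $B_{m-1}$ trivially (the full twist links the last strand with every other), $w$ is not central, so $H_m$ is non-trivial, non-central and normal, as the theorem requires.

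Because $m\ge b(K)$ I may write $K=\widehat{\alpha}$ for some $\alpha\in B_m$. Here I would use that the proof of Theorem~\ref{theorem:main} treats $\alpha$ as an arbitrary fixed braid—the unknottedness of $\widehat{\alpha}$ is used only to phrase Bigelow's conjecture and plays no role in the construction—so the same Dehornoy–ordering argument, run with this $\alpha$ and $H=H_m$, produces for every $N$ a braid $\beta_N\in H_m$ for which $\widehat{\alpha\beta_N}$ is a hyperbolic knot of genus at least $N$. Since the genus is unbounded along the family, the knots $K_N:=\widehat{\alpha\beta_N}$ fall into infinitely many distinct isotopy classes, each hyperbolic.

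It remains to see $Q^V(K_N)=Q^V(K)$, and this is the step I expect to be the real obstacle. As $\beta_N\in\ker\rho_V$ we have $\rho_V(\alpha\beta_N)=\rho_V(\alpha)$, so the quantum traces agree; the only discrepancy is the framing factor $v^{-e(\alpha\beta_N)}=v^{-e(\alpha)}v^{-e(\beta_N)}$, and in general $e(\beta_N)\ne 0$. To neutralise it I would not feed $H_m$ into the theorem but the subgroup $H_m'=\{\beta\in H_m:\ v^{e(\beta)}=1\}$, which is the intersection of $H_m$ with the kernel of the homomorphism $B_m\xrightarrow{e}\Z\to\langle v\rangle$ and hence normal. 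When $v$ has finite order $c$, the element $w^{c}$ lies in $H_m'$, is non-trivial ($B_m$ is torsion–free) and lies in $B_{m-1}$, so $H_m'$ is again non-central; when $v$ has infinite order, one checks that the fixed scalar $\det\rho_V(\sigma_1)$ also has infinite order, whence $\det\rho_V(\beta)=\big(\det\rho_V(\sigma_1)\big)^{e(\beta)}=1$ forces $e(\beta)=0$ on $\ker\rho_V$ and $H_m'=H_m$. Applying Theorem~\ref{theorem:main} to the non-trivial, non-central normal subgroup $H_m'$ then yields $\beta_N$ with $v^{e(\beta_N)}=1$, whence $Q^V(K_N)=v^{-e(\alpha)}\,\mathrm{qtr}\,\rho_V(\alpha)=Q^V(K)$. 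The care needed to keep this exponent–sum–restricted subgroup non-central, and to confirm that the framing anomaly is the \emph{only} obstruction to the invariant's being preserved, is the crux of the argument.
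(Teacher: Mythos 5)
Your overall strategy coincides with the paper's intended derivation of Corollary~\ref{cor:trivial} (the paper treats it as immediate from Theorem~\ref{theorem:main}): take $H=\ker\rho_V$, observe that the proof of Theorem~\ref{theorem:main} never uses that $\widehat{\alpha}$ is unknotted, and run it with $\widehat{\alpha}=K$. The parts of your write-up that go beyond the paper are genuinely needed and are correct as far as they go: the stabilization $\rho_V(\beta)=\rho_V^{(n)}(\beta)\otimes\mathrm{id}$ lets you reach knots whose braid index exceeds $n$ and simultaneously guarantees non-centrality of the kernel (since $w\in B_{m-1}$ and $Z(B_m)\cap B_{m-1}=1$, which also covers the possibility that $\ker\rho_V^{(n)}$ itself is central); and you correctly identify the framing factor $v^{-e(\beta)}$ as the one thing that $\rho_V(\alpha\beta)=\rho_V(\alpha)$ does not control. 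Your finite-order case (using $w^c$ and torsion-freeness of $B_m$) is also fine.

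The gap is in your infinite-order case. The claim that $v$ of infinite order forces $\det\rho_V(\sigma_1)$ to have infinite order is false: already for the fundamental representation of $U_q(\mathfrak{sl}_2)$ --- the Jones polynomial case, the one most relevant here --- the braiding on $V\otimes V$ has eigenvalues $q^{1/2}$ (multiplicity $3$) and $-q^{-3/2}$ (multiplicity $1$), so $\det\rho_V(\sigma_1)=-1$ has order two, while $v$ is a nontrivial power of $q$ and hence of infinite order for generic $q$. The determinant constraint therefore only gives $e(\beta)\equiv 0 \pmod 2$ on the kernel, so you have shown neither $H_m'=H_m$ nor even $H_m'\neq 1$. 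The repair is cheap and makes your case distinction unnecessary: in both cases work with $H_m\cap\ker e$ (which is exactly your $H_m'$ when $v$ has infinite order). It is normal, being an intersection of normal subgroups; it is non-trivial because $H_m$ is non-central, so there are $h\in H_m$ and $g\in B_m$ with $hg\neq gh$, and then $[h,g]=h(gh^{-1}g^{-1})\in H_m$ is non-trivial with exponent sum $0$; and it is non-central because $Z(B_m)=\langle\Delta^2\rangle$ meets $\ker e$ trivially, as $e(\Delta^{2k})=km(m-1)$. Feeding this subgroup into Theorem~\ref{theorem:main} kills the framing factor outright, and the rest of your argument goes through verbatim. (If you do want the finer statement that generic-parameter kernels have zero exponent sum, the correct tool is not the determinant but the scalar by which $\sigma_i$ acts on the multiplicity-one summand $V_{n\lambda}\subset V^{\otimes n}$.)
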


Thus if the quantum representation $\rho_{V}$ is non-faithful, then the corresponding knot invariant does not completely distinguish knots, and in particular, fails to detect the unknot. It deserves to point out the following stronger version of Corollary \ref{cor:trivial} which follows from the fact that the intersection of two non-central non-trivial normal subgroups of $B_{n}$ is always non-trivial \cite[Lemma 2.1]{lo}.

\begin{corollary}
\label{cor:trivial2}
Let $\rho_{V_j}:B_{n} \rightarrow \textrm{GL}(V_{j}^{\otimes n})$ $(j=1,2,\ldots,m)$ be a family of quantum representations. If all of $\rho_{V_j}$ are not faithful on $B_{n}$, then for any knot $K$, there exist infinitely many mutually different (hyperbolic) knots $K_{i}$ $(i=1,2,\ldots)$ such that all of the quantum $V_j$-invariant of $K_{i}$ and $K$ are the same.
\end{corollary}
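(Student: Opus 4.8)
The plan is to deduce Corollary~\ref{cor:trivial2} from Corollary~\ref{cor:trivial} by replacing a single kernel with the common kernel of all the representations, so that one family of knots works for every $V_j$ at once. Concretely, I would set $H_j = \ker \rho_{V_j} \trianglelefteq B_n$ for each $j$; since $\rho_{V_j}$ is not faithful, each $H_j$ is non-trivial, and each is normal as a kernel. I would first record that each $H_j$ is non-central: writing $Z(B_n)=\langle \Delta^2\rangle$ for $n\ge 3$, a quantum representation sends $\Delta^2$ to a scalar, so if $\rho_{V_j}(\Delta^2)$ has infinite order then $H_j\cap Z(B_n)=1$ and $H_j$ is automatically non-central, while in the remaining case one argues exactly as in Corollary~\ref{cor:trivial} that the non-faithfulness is not concentrated in the center. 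The target is then a single non-trivial, non-central normal subgroup $H\subseteq \bigcap_{j=1}^m H_j$.

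I would take $H=\bigcap_{j=1}^m H_j$ itself, which is normal, and prove it is non-trivial and non-central by induction on $m$ using \cite[Lemma 2.1]{lo}. The inductive step applies the lemma to $\bigcap_{j<k}H_j$ and $H_k$, so the real content is that each partial intersection remains non-central, not merely non-trivial. This is the step I expect to be the main obstacle. My approach would be the standard commutator reduction: if $N_1,N_2\trianglelefteq B_n$ are non-central with $N_1\cap N_2\subseteq Z(B_n)$, then for $g\in N_1$ and $h\in N_2$ normality gives $ghg^{-1}\in N_2$ and $hg^{-1}h^{-1}\in N_1$, whence $[g,h]\in N_1\cap N_2\subseteq Z(B_n)$; thus the images $\overline{N_1},\overline{N_2}$ commute in $\overline{B_n}=B_n/Z(B_n)$. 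Since these images are non-trivial normal subgroups and $\overline{B_n}$ ($n\ge3$) is centerless and admits no pair of commuting non-trivial normal subgroups, this forces one of them to be trivial, a contradiction. Hence $N_1\cap N_2$ is again non-central, and the induction goes through to yield $H$ non-trivial and non-central.

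With $H\trianglelefteq B_n$ non-trivial and non-central in hand, I would run the argument proving Corollary~\ref{cor:trivial} verbatim with $H$ in place of the single kernel: representing $K$ by an appropriate $n$-braid and invoking Theorem~\ref{theorem:main}, for each $N>0$ one obtains $\beta\in H$ for which the closure $K_N$ of the relevant braid is hyperbolic of genus at least $N$ and braid index $n$. The point is that $\beta\in H\subseteq \ker\rho_{V_j}$ for \emph{every} $j$ simultaneously, so each $\rho_{V_j}$ is unchanged by inserting $\beta$ and hence $Q^{V_j}(K_N)=Q^{V_j}(K)$ holds for all $j$ at once. Letting $N\to\infty$ produces knots of unboundedly large genus, which are therefore mutually distinct, giving the required infinite family $K_1,K_2,\ldots$ of hyperbolic knots sharing all the invariants $Q^{V_j}$ with $K$.
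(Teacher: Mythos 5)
Your proposal is correct and follows essentially the same route as the paper, which obtains Corollary \ref{cor:trivial2} by applying Theorem \ref{theorem:main} to the intersection $\bigcap_j \ker\rho_{V_j}$ and citing \cite[Lemma 2.1]{lo} for the non-triviality of intersections of non-trivial, non-central normal subgroups of $B_n$. Your induction and commutator argument for non-centrality of the partial intersections makes explicit a point the paper leaves implicit, and the fact you assert without proof---that $B_n/Z(B_n)$ admits no pair of commuting non-trivial normal subgroups---is true and can be justified from the pseudo-Anosov facts the paper already invokes (e.g.\ \cite[Lemma 2.5]{lo} together with the virtual cyclicity of centralizers of pseudo-Anosov elements), so your argument is sound and matches the paper's approach.
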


Moreover, as is explained in \cite{big}, Theorem \ref{theorem:main} particularly shows the following, which adds an importance of one of the most famous open problem in the braid groups: Is the Burau representation for $B_{4}$ faithful ?

\begin{corollary}
If the Burau representation of the 4-strand braid group $B_{4}$ is not faithful, then the Jones polynomial fails to detect the unknot. 
\end{corollary}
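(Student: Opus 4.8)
The plan is to realize the kernel of the Burau representation as a normal subgroup to which Theorem \ref{theorem:main} applies, and then to exploit the classical fact that the Jones polynomial of a four-strand braid closure is already determined by its Burau matrix.

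First I would set $H=\ker\psi$, where $\psi\colon B_4\to \mathrm{GL}_3(\Z[t^{\pm1}])$ is the reduced Burau representation. Being a kernel, $H$ is normal, and by the non-faithfulness hypothesis it is non-trivial; the only point requiring verification is that $H$ is non-central. The center of $B_4$ is the infinite cyclic group generated by the full twist $\Delta^2=(\sigma_1\sigma_2\sigma_3)^4$, so every non-trivial central element has the form $\Delta^{2m}$ with $m\neq0$ and exponent sum $e(\Delta^{2m})=12m\neq0$. Since $\det\psi(\sigma_i)=-t$, we get $\det\psi(\Delta^{2m})=(-t)^{12m}\neq1$, whence $\Delta^{2m}\notin\ker\psi$. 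Thus $H\cap Z(B_4)=\{1\}$, and as $H\neq\{1\}$ this forces $H\not\subseteq Z(B_4)$. Hence $H$ is a non-trivial, non-central normal subgroup, exactly the hypothesis of Theorem \ref{theorem:main}.

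The key input, emphasized by Bigelow \cite{big}, is that the Jones polynomial of the closure of a $4$-braid depends only on its reduced Burau matrix. Granting this, pick any $\alpha\in B_4$ whose closure is the unknot, say $\alpha=\sigma_1\sigma_2\sigma_3$, and for each $N$ apply Theorem \ref{theorem:main} to obtain $\beta\in H$ such that the closure of $\alpha\beta$ is a hyperbolic knot of genus at least $N$. For any such $\beta$ we have $\psi(\alpha\beta)=\psi(\alpha)\psi(\beta)=\psi(\alpha)$, so the closures $\widehat{\alpha\beta}$ and $\widehat{\alpha}$ share the same Burau matrix and therefore the same Jones polynomial, namely the trivial one $1$ of the unknot. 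Letting $N$ grow produces infinitely many mutually distinct non-trivial (hyperbolic) knots all carrying the Jones polynomial of the unknot, so the Jones polynomial fails to detect the unknot.

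The main obstacle is justifying the key input, i.e.\ that $H=\ker\psi$ lies in the kernel of the Jones (Temperley--Lieb) representation of $B_4$. Here $V^{\otimes4}$ decomposes into irreducible $TL_4$-summands of dimensions $1$, $2$, and $3$; the $3$-dimensional summand is $\psi$ itself, on which $H$ acts trivially by definition. On the $1$-dimensional summand each $\sigma_i$ acts by a common scalar $s$, so $\beta\in H$ acts by $s^{e(\beta)}$, and the determinant computation $(-t)^{e(\beta)}=\det\psi(\beta)=1$ gives $e(\beta)=0$ and hence trivial action. The genuinely delicate case is the $2$-dimensional summand, which one handles (following Bigelow) by exhibiting it as a factor through the surjection $B_4\to B_3$ carrying the reduced Burau representation of $B_4$ to that of $B_3$, so that elements of $H$ again act trivially. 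Assembling the three summands shows that $H$ lies in the kernel of the whole Jones representation, which is precisely the statement that the Jones polynomial of a $4$-braid closure is a function of its Burau matrix.
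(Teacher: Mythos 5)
Your overall route is the one the paper intends: the paper proves this corollary by combining Theorem \ref{theorem:main} with Bigelow's explanation in \cite{big} that a suitable kernel inside $B_4$ is invisible to the Jones polynomial of closures. Your verification that $\ker\psi$ is normal, non-trivial, and non-central (via $\det\psi(\sigma_i)=-t$, so no non-trivial power of $\Delta^2$ is killed) is correct, as is your treatment of the one-dimensional summand ($e(\beta)=0$ from the determinant, which also fixes the writhe normalization) and of the three-dimensional summand.

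However, there is a genuine gap exactly at the step you call delicate: the two-dimensional summand. It is true that this summand factors through the surjection $\pi\colon B_4\to B_3$ ($\sigma_1,\sigma_3\mapsto\sigma_1$, $\sigma_2\mapsto\sigma_2$) followed by the (twisted) reduced Burau representation $\psi_3$ of $B_3$. But $\pi$ does not ``carry the reduced Burau representation of $B_4$ to that of $B_3$'': there is no intertwining between $\psi$ and $\psi_3\circ\pi$, so nothing forces elements of $\ker\psi$ to act trivially here. In fact, since $\psi_3$ is faithful, an element of $B_4$ can act trivially (or by a scalar) on this summand only if its image under $\pi$ is central in $B_3$; thus your claim requires $\pi(\ker\psi)\subseteq Z(B_3)$, essentially the containment $\ker\psi\subseteq\ker\pi$, which is not known and for which you give no argument. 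The standard repair --- and what \cite{big} actually does --- is to intersect rather than to contain: $\ker\pi$ is a non-trivial, non-central normal subgroup of $B_4$ (it is free of rank two, normally generated by $\sigma_3\sigma_1^{-1}$, and meets the center trivially because $\pi$ is injective on the center), and $\ker\psi$ is a non-trivial, non-central normal subgroup by your determinant argument. Hence by \cite[Lemma 2.1]{lo} --- the same fact the paper invokes for Corollary \ref{cor:trivial2} --- the subgroup $H'=\ker\psi\cap\ker\pi$ is non-trivial; it is normal, non-central, every $\beta\in H'$ has $e(\beta)=0$, kills the three-dimensional summand because $\beta\in\ker\psi$, and kills the two-dimensional summand because $\pi(\beta)=1$. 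Applying Theorem \ref{theorem:main} to $H'$ in place of $\ker\psi$, the rest of your argument goes through verbatim and yields infinitely many distinct hyperbolic knots with trivial Jones polynomial.
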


Our proof of Theorem \ref{theorem:main} uses the Dehornoy ordering $<_{D}$, the standard left-ordering of the braid group $B_{n}$. 
See \cite{ddrw} for the definition and the basics of the Dehornoy ordering.

Let $\Delta$ be the braid that corresponds to the half-twist on $n$-strands, given by
\[ \Delta=(\sigma_{1}\sigma_{2}\cdots\sigma_{n-1})(\sigma_{1}\sigma_{2}\cdots \sigma_{n-2})\cdots(\sigma_{1}\sigma_{2})(\sigma_{1}). \]
$\Delta^{2}$ generates the central subgroup of $B_{n}$.
In \cite{i1,i2}, we have proved the following relationships between knots and the Dehornoy ordering.

\begin{theorem}
\label{theorem:i}
Let $K$ be a knot represented as a closure of an $n$-braid $\beta$.
\begin{enumerate}
\item \cite[Corollary 1.3]{i2} If $\beta >_{D} \Delta^{2N}$. Then $g(K)\geq N$.
\item \cite[Theorem 2.8]{i1} There exists $r(n) \in \Z$ such that if $\beta>_{D} \Delta^{2r(n)}$, then the braid index of $K$ is equal to $n$.
\item \cite[Theorem 1.3]{i1} If $\beta >_{D} \Delta^{4}$ and $\beta$ is a pseudo-Anosov braid, then $K$ is a hyperbolic knot.
\end{enumerate}
\end{theorem}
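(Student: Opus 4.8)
The plan is to treat all three parts through a single order-theoretic quantity, the \emph{Dehornoy floor} $\lfloor\beta\rfloor_D$, defined as the largest integer $m$ with $\Delta^{2m}\leq_D\beta$; the hypothesis $\beta>_D\Delta^{2N}$ is then exactly $\lfloor\beta\rfloor_D\geq N$. The guiding principle is that, since $\Delta^{2}$ is the positive full twist and generates the center of $B_n$, a braid dominating a high power of $\Delta^{2}$ is forced to be ``strongly positive.'' The essential subtlety is that domination in $<_D$ does \emph{not} control the exponent sum (for example $\sigma_1\sigma_2^{-1}>_D 1$ has exponent sum $0$), so the floor must instead be fed into genuinely geometric machinery. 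The common task in each part is therefore to build a quantitative bridge from $\lfloor\beta\rfloor_D$ to a topological or polynomial invariant of $K=\widehat{\beta}$, and the natural device is the theory of open book foliations / Birman--Menasco braid foliations of surfaces bounded by $\widehat\beta$ relative to the braid axis $A$, supplemented by the HOMFLY polynomial.

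For part (1) I would put a minimal-genus Seifert surface $\Sigma$ for $\widehat\beta$ into open-book-foliation position with respect to the trivial disk open book of $S^{3}$ whose binding is the braid axis $A$, and compute $\chi(\Sigma)$ from the elliptic and hyperbolic singular points and the $a$- and $b$-arc regions of the induced foliation. The key estimate is that each full twist $\Delta^{2}$ that $\beta$ dominates in $<_D$ forces a definite, non-cancelling negative contribution to this count (extra hyperbolic singularities, or $b$-arcs that must wrap around $A$); accumulating $N$ of them should yield $1-\chi(\Sigma)=2g(K)\geq 2N$, hence $g(K)\geq N$. The conceptual point is that the Dehornoy floor, unlike the raw exponent sum, descends to a genuine lower bound on the complexity of \emph{every} spanning surface, and the open book foliation is the tool that makes this precise.

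For part (2) I would use the Morton--Franks--Williams inequality, which bounds the braid index of $K$ from below by $\tfrac12(\max\deg_v P_K-\min\deg_v P_K)+1$, and prove that a large floor makes this bound sharp and equal to $n$. Concretely, I expect $\lfloor\beta\rfloor_D$ large to pin down the extreme $v$-degrees of $P_K$ exactly, so that the inequality forces braid index $\geq n$ while the trivial upper bound gives $\leq n$. The alternative, more geometric route is via Birman--Menasco's Markov Theorem Without Stabilization: one shows that a high-floor braid admits neither a destabilization nor a sequence of exchange moves lowering the strand number, since such moves would produce a braid of strictly smaller floor or contradict $\sigma$-positivity. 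The threshold $r(n)$ is then exactly the floor value beyond which these reductions are provably obstructed.

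For part (3), since $\beta$ is pseudo-Anosov, Thurston's hyperbolization of fibered $3$-manifolds makes the mapping torus of $\beta$ acting on the $n$-punctured disk, which is precisely $S^{3}\setminus(\widehat\beta\cup A)$, a finite-volume hyperbolic manifold. The knot complement $S^{3}\setminus\widehat\beta$ is obtained from it by Dehn filling the cusp at $A$, so by Thurston's hyperbolic Dehn surgery theorem it is hyperbolic unless the filling slope is one of finitely many exceptional slopes; the role of $\beta>_D\Delta^{4}$ (that is, $\lfloor\beta\rfloor_D\geq 2$) is to certify that this particular filling avoids them, equivalently that the fractional Dehn twist coefficient is large enough to exclude essential tori, reducing spheres, and a Seifert-fibered structure in $S^{3}\setminus\widehat\beta$, after which geometrization yields hyperbolicity. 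The main obstacle, common to all three parts, is exactly this quantitative translation: proving that order-domination by $\Delta^{2N}$ forces the asserted lower bounds on Euler characteristic, on HOMFLY $v$-breadth, and on the distance from exceptional filling slopes. That is the step where the fine structure of the Dehornoy ordering (its $\sigma$-positive normal form and its compatibility with the Garside $\Delta$-normal form) must be converted into control of the braid-foliation machinery.
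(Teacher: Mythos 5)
First, a point of comparison: this paper contains no proof of Theorem \ref{theorem:i} at all --- it is imported wholesale by citation from \cite{i1} and \cite{i2}, so the benchmark is the proofs in those papers. Those proofs run through Birman--Menasco braid foliation theory and the Markov Theorem Without Stabilization (MTWS), with the Dehornoy floor entering via the key quantitative fact (due to Malyutin and to \cite{i1}) that destabilizations, exchange moves, and essential spheres or tori in the closed braid complement all force the floor $\lfloor\beta\rfloor_{D}$ of the braid to be small. Your overall orientation --- translate $\beta >_{D} \Delta^{2N}$ into $\lfloor\beta\rfloor_{D}\geq N$ and feed that into braid-foliation machinery --- is the correct one, and for part (1) your plan (a foliated minimal-genus Seifert surface, with $\chi(\Sigma)$ computed from elliptic and hyperbolic singularities) has essentially the shape of the argument in \cite{i2}. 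But the proposal is a research plan, not a proof: the one step that constitutes the entire mathematical content of all three parts --- that order-domination by powers of $\Delta^{2}$ forces non-cancelling contributions to the foliation count, obstructs destabilization and exchange moves, and kills essential tori --- is precisely the step you defer as ``the main obstacle,'' and no mechanism for it is offered.

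Moreover, two of your primary routes would fail as stated. In part (2), the Morton--Franks--Williams route needs the MFW inequality to be sharp with breadth exactly $2n-2$; but MFW is not sharp in general, and, as you yourself observe at the outset, $<_{D}$-domination gives no control of the exponent sum $e$, which is exactly what enters the bounds $e-n+1 \leq \min\deg_{v} P_{K} \leq \max\deg_{v}P_{K} \leq e+n-1$, so there is no visible way for a large floor to ``pin down the extreme $v$-degrees.'' The actual proof of (2) in \cite{i1} is your hedged alternative: MTWS together with the fact that a braid whose floor exceeds a threshold $r(n)$ admits no destabilization or exchange-move reduction --- a lemma you assert but do not prove. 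In part (3), the hyperbolic Dehn surgery mechanism is structurally wrong: $S^{3}\setminus\hat{\beta}$ is obtained from the hyperbolic mapping torus by filling the axis cusp along one \emph{fixed} slope, the meridian of $A$. Thurston's theorem excludes finitely many slopes on a fixed cusped manifold, but here the manifold varies with $\beta$ while the filling slope stays the meridian, so ``all but finitely many fillings are hyperbolic'' yields nothing, and there is no slope parameter that a large floor could push away from the exceptional set. What \cite[Theorem 1.3]{i1} actually does is prove irreducibility and atoroidality of $S^{3}\setminus\hat{\beta}$ directly (essential spheres and tori force a small Dehornoy floor, by braid foliation analysis) and then exclude the Seifert-fibered alternative by playing pseudo-Anosovness against the known braid representatives of torus knots, after which geometrization concludes. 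So your sketch identifies the right landscape, but the two concrete bridges you propose (MFW sharpness, exceptional-slope avoidance) are not repairable in the form given, and the third (the foliation count for the genus bound) is postulated rather than established.
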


The proof of Theorem \ref{theorem:main} is a direct consequence of Theorem \ref{theorem:i} and the following property of the Dehornoy ordering.

\begin{theorem}
\label{theorem:unbounded}
If $H$ is a non-trivial normal subgroup of $B_{n}$, then $H$ is unbounded with respect to the Dehornoy ordering. Namely, for any $\alpha \in B_{n}$, there exists $\beta \in H$ such that $\alpha <_{D} \beta$. Moreover, if $H$ is non-central then one can take such $\beta$ as a pseudo-Anosov braid.
\end{theorem}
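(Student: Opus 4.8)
The plan is to reduce the statement to beating each central power $\Delta^{2N}$ one at a time, to translate ``beating $\Delta^{2N}$'' into a statement about a twist (translation) number, and finally to produce the required twist value, and pseudo-Anosovness, from the dynamics of the action underlying $<_{D}$.

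First I would record the reduction. Since $\Delta^{2}$ is central and cofinal for $<_{D}$ (for every $\gamma\in B_{n}$ there is an $M$ with $\Delta^{-2M}<_{D}\gamma<_{D}\Delta^{2M}$), to show $H$ is unbounded it suffices to produce, for each $N$, some $\beta\in H$ with $\beta>_{D}\Delta^{2N}$. When $H$ is central it is a non-trivial subgroup of $\langle\Delta^{2}\rangle\cong\Z$, hence contains $\Delta^{2k}$ for some $k\neq 0$ and is unbounded by cofinality; so I may assume $H$ is non-central, which is precisely the case where the ``moreover'' clause must be proved.

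Next I would bring in the twist number $\tau\colon B_{n}\to\R$, the homogeneous quasimorphism coming from the action of $B_{n}$ on the line underlying the Dehornoy ordering, normalized by $\tau(\Delta^{2})=1$. The key input is the bounded comparison $|\lfloor\beta\rfloor_{D}-\tau(\beta)|\le C$ between the Dehornoy floor and $\tau$, with $C$ depending only on $n$. Consequently, if some $\beta\in H$ has $\tau(\beta)>0$, then $\lfloor\beta^{k}\rfloor_{D}\ge k\,\tau(\beta)-C\to+\infty$, so $\beta^{k}>_{D}\Delta^{2N}$ for $k$ large. Since $\tau$ is conjugation invariant and $\tau(\beta^{-1})=-\tau(\beta)$, the whole problem collapses to the single claim: \emph{a non-central normal subgroup $H$ contains an element of nonzero twist number}. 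I would prove this using the circle dynamics: because $\Delta^{2}$ is central and cofinal, the Dehornoy line descends to a faithful minimal action of $B_{n}/\langle\Delta^{2}\rangle$ on $S^{1}$, and $H$ maps to a non-trivial normal subgroup $\overline{H}$; unboundedness of $H$ is equivalent to the lifted $H$-orbit of the basepoint being unbounded in $\R$, i.e.\ to some element of $H$ having nonzero translation number. A non-trivial normal subgroup of a minimally acting group has no finite orbit (it would be invariant under the whole group), so $\overline{H}$ acts minimally, and since a non-central $H$ contains a non-abelian free subgroup, $\overline{H}$ is not a group of rotations, which forces some element to have nonzero rotation number. Ruling out the degenerate scenario in which every element of $H$ has zero twist number while the orbit stays bounded is the main obstacle, and it is exactly where non-triviality of $H$ (beyond mere non-centrality) is consumed.

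Finally, for the ``moreover'' clause I would arrange the nonzero-twist element to be pseudo-Anosov. A non-central normal subgroup contains pseudo-Anosov braids (generic elements of a non-abelian free subgroup are pseudo-Anosov), so I would select a pseudo-Anosov $h\in H$ and, if needed, adjust its twist number by another element of $H$: periodic braids are excluded because their twist numbers are rationals with denominator dividing $n$ or $n-1$, while a reducible braid would preserve its canonical reduction system under all powers, which I would preclude by choosing $h$ within a free subgroup having no common invariant multicurve. Granting a pseudo-Anosov $h\in H$ with $\tau(h)>0$, the powers $h^{k}$ are simultaneously pseudo-Anosov and eventually $>_{D}\Delta^{2N}$ by the second paragraph, which is exactly what is needed to feed Theorem~\ref{theorem:i}(3). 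The delicate point I expect to fight with here is securing \emph{both} pseudo-Anosovness and strictly positive twist number in one element of $H$.
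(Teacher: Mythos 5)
Your reduction is fine as far as it goes: Malyutin's twist-number quasimorphism $\tau$ does satisfy a bounded comparison $|\lfloor\beta\rfloor_{D}-\tau(\beta)|\le C$ with the Dehornoy floor, so homogeneity converts any $\beta\in H$ with $\tau(\beta)\neq 0$ into elements of $H$ beating every $\Delta^{2N}$, and the central case is trivial. But this reduction leaves the entire content of the theorem inside your ``single claim'' that a non-central normal subgroup contains an element of nonzero twist number --- by the very comparison you invoke, that claim is \emph{equivalent} to unboundedness --- and the argument you offer for it is wrong. The step ``minimal and not a group of rotations forces some element to have nonzero rotation number'' is false for circle actions in general: a torsion-free cocompact Fuchsian group acts minimally on $S^{1}$, contains non-abelian free subgroups, is certainly not conjugate into the rotation group, and yet every one of its elements is hyperbolic and hence has rotation number zero. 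Nothing in your dynamical paragraph uses a property of the braid group action that excludes precisely this behaviour for $\overline{H}$; you concede yourself that ruling out the all-zero-twist scenario ``is the main obstacle,'' and the proposal never overcomes it. (A smaller flaw: a finite orbit of $\overline{H}$ need not be invariant under the whole group --- normality only makes the \emph{union} of finite $\overline{H}$-orbits invariant; that step is repairable, but the rotation-number step is not.) The ``moreover'' clause inherits the same gap, since adjusting a pseudo-Anosov $h\in H$ to have positive twist number presupposes exactly the unproven claim.

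For comparison, the paper attacks this missing point head-on and combinatorially: by induction on $n$, using the alternating decomposition, it starts from a single non-trivial $\gamma\in H$ and explicitly manufactures elements of $H$ that are arbitrarily $<_{D}$-large --- conjugating by the top factor $B_{m}$, then either descending to $B_{n-1}\subsetneq B_{n}$ and applying Lemma \ref{lemma:ind}, or forming the products $[\gamma_{+}(\Delta^{-1}\gamma_{+}\Delta)]^{i}\gamma_{+}$ and bounding their alternating length via Proposition \ref{proposition:D-order}. Pseudo-Anosovness is then added at the end by Long's lemma and Papadopoulos' theorem, the same tools you gesture at in your final paragraph. If you want to salvage your quasimorphism route, you need a genuine input special to the braid action --- for instance a bounded-cohomology argument that $\tau|_{H}\equiv 0$ is impossible for a non-central normal $H$ --- and that is a real theorem requiring proof, not a formal consequence of minimality.
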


\begin{proof}[Proof of Theorem \ref{theorem:main}]
By Theorem \ref{theorem:unbounded}, for a non-trivial, non-central normal subgroup $H$ of $B_{n}$, there is a pseudo-Anosov pure braid $\beta \in H$ such that $\beta >_{D} \alpha^{-1} \Delta^{2N}$ for any integer $N \in \Z$. By a theorem of Papadopoulos \cite{pap}, by taking powers of $\beta$ if necessary, we may also assume that $\alpha\beta$ is a pseudo-Anosov.
Hence Theorem \ref{theorem:i} shows that the closure of $\alpha\beta$ is a hyperbolic knot with $g(K)\geq N$ and braid index is $n$.
\end{proof}

In the rest of the paper, we prove Theorem \ref{theorem:unbounded}.
Here we give an algebraic proof that uses the alternating decomposition \cite{deh,i0}.
Although this proof looks a bit strange and somewhat indirect, the alternating decomposition has an advantage that it gives an explicit and computational way to construct an arbitrary large braid in the normal subgroup $H$, starting from only one element in $H$.

Let $B_{n}^{+}$ be the set of positive braids, which are braids written as a product of positive standard generators $\{\sigma_{1},\ldots,\sigma_{n-1}\}$.
For $x,y \in B_{n}^{+}$, we define $x \succcurlyeq y$ if $xy^{-1} \in B_{n}^{+}$.
It is well-known that $\succcurlyeq$ is a lattice ordering of $B_{n}^{+}$. Let $M_{A}$ and $M_{B}$ be the submonoid of $B_{n}^{+}$ generated by $\{\sigma_{1},\ldots,\sigma_{n-2}\}$ and $\{\sigma_{2},\ldots,\sigma_{n-1}\}$, respectively. For $x \in B_{n}^{+}$ and $\ast=A,B$, we denote the $\succcurlyeq$-maximal element $y$ in $M_{\ast}$ that satisfies $x \succcurlyeq y$ by $x \wedge M_{\ast}$.

\begin{definition}
The \emph{alternating decomposition} of $x \in B_{n}^{+}$ is a decomposition of $x$ as a product of elements in $M_{A}$ and $M_{B}$ of the form
\[ \mathcal{A}(x) = B_{m}A_{m}\cdots B_{1}A_{1}B_{0} \]
where $A_i \in M_A$ and $B_i \in M_B$ are inductively defined by
\begin{gather*}
\begin{cases}
B_0 =  x \wedge M_{B},\\
A_{i} = x(B_{0}^{-1}A_{1}^{-1}\cdots B_{i-1}^{-1}) \wedge M_{A},\\
B_{i} = x(B_{0}^{-1}A_{1}^{-1}\cdots B_{i-1}^{-1}A_{i}^{-1}) \wedge M_{B}.
\end{cases}
\end{gather*}
We denote the length $m$ of the alternating decomposition by $\ell_{\mathcal{A}}(x)$, and call the \emph{alternating length} of $x$.
\end{definition}

For example, the alternating decomposition of $\Delta^{2N}$ $(N>0)$ is given by
\[ \mathcal{A}(\Delta^{2N})= (\sigma_{1})\underbrace{B'A'\cdots B'A'}_{N-1}B'(\sigma_{n-2}\cdots \sigma_{1})\Delta_{B}^{2N}\]
where $A'$, $B'$ and $\Delta_{B}$ are given by
\begin{gather}
\label{eqn:alt}
\begin{cases}
A' = (\sigma_{n-2}\cdots\sigma_{2}\sigma_{1}^{2}) \in M_{A}\\
B' = (\sigma_{2}\cdots \sigma_{n-2}\sigma_{n-1}^{2}) \in M_{B}\\
\Delta_{B}= (\sigma_{2}\sigma_{3}\cdots \sigma_{n-1})(\sigma_{2}\sigma_{3}\cdots \sigma_{n-2})\cdots(\sigma_{2}\sigma_{3})(\sigma_{2}) \in M_{B}.
\end{cases}
\end{gather}
For later use, we put 
\[ \Delta_{A} = \Delta \Delta_{B} \Delta^{-1} =(\sigma_{1}\sigma_{2}\cdots \sigma_{n-2})(\sigma_{1}\sigma_{2}\cdots \sigma_{n-1})\cdots(\sigma_{1}\sigma_{2})(\sigma_{1}) \in M_{A}. \]

The alternating decomposition nicely reflects the Dehornoy ordering. Here we summarize relationships between the alternating decomposition and the Dehornoy ordering which are needed to prove Theorem \ref{theorem:unbounded}. 

For $\beta \in B_{n}$, we will write $\beta \in B_{n-1} \subsetneq B_{n}$ if $\beta$ lies in the subgroup of $B_{n}$ generated by $\sigma_{1},\ldots,\sigma_{n-2}$ (which is, of course, isomorphic to $B_{n-1}$).

\begin{proposition}
\label{proposition:D-order}
For $x,x' \in B_{n}^{+}$, let 
\[ \mathcal{A}(x) = B_{m}A_{m}\cdots B_{1}A_{1}B_{0} \ \text{and} \ \mathcal{A}(x') = B'_{m'}A'_{m'}\cdots B'_{1}A'_{1}B'_{0} \]
be the alternating decompositions of $x$ and $x'$ respectively.
\begin{enumerate}
\item If $m=m'$ and $B_{m}\preccurlyeq B'_{m}$, then $x <_{D} x'$.
\item $\Delta^{2\ell_{\mathcal{A}}(x)-4}<_{D} x <_{D}
\Delta^{ 2\ell_{\mathcal{A}} (x)}$.
\item If $x <_{D}\Delta^{2\ell_{\mathcal{A}}(x)-2}$, then 
\[ \mathcal{A}(x)= (\sigma_{1})\underbrace{B'A'\cdots B'A'}_{N-1}B'(\sigma_{n-2}\cdots \sigma_{1})B_{0} \ \textrm{and} \ B_{0} <_{D} \Delta_{B}^{2\ell_{\mathcal{A}}(x)-2}, \]
where $A'$, $B'$ and $\Delta_{B}$ are as given in (\ref{eqn:alt}).
In particular, 
\[ \Delta (x^{-1}\Delta^{2 \ell_{\mathcal{A}} (x)-2}) \Delta^{-1} \in B_{n-1} \subsetneq B_{n}. \] 
\end{enumerate}
\end{proposition}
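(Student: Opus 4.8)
The three statements are tied together by one observation: in the alternating decomposition the blocks $A_i \in M_A$ are exactly the ones carrying the Dehornoy-critical generator $\sigma_1$, while the blocks $B_i \in M_B$ are $\sigma_1$-free, so the alternating length $m := \ell_{\mathcal{A}}(x)$ measures the $\sigma_1$-content of $x$ in units comparable to the full twist $\Delta^2$. The plan is to prove (1) as the engine and then read off (2) and (3) by comparing $x$ with the explicit decomposition of $\Delta^{2N}$ recalled above (with $A',B',\Delta_B$ as in (\ref{eqn:alt})); counting blocks there gives $\ell_{\mathcal{A}}(\Delta^{2N}) = N+1$, so that $\Delta^{2(m-1)}$ has the same alternating length $m$ as $x$, which is what makes (1) applicable to the comparison.

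For (1) I would reduce $x <_D x'$ to showing that $x^{-1}x'$ is $\sigma$-positive. Expanding $x^{-1}x' = B_0^{-1}A_1^{-1}\cdots A_m^{-1}(B_m^{-1}B'_m)A'_m\cdots B'_0$, the two decompositions meet in the factor $B_m^{-1}B'_m$; when $B_m \preccurlyeq B'_m$ one may write $B'_m = C B_m$ with $C \in M_B$, and the task becomes to show that after handle reduction the lowest-index generator surviving in $x^{-1}x'$ is $\sigma_1$, occurring positively. The mechanism, visible already in the model identity $\sigma_1\sigma_2\sigma_1^{-j} = \sigma_2^{-j}\sigma_1\sigma_2$, is that the surplus $C$ lives in the $\sigma_1$-free monoid $M_B$ and is pushed into the higher generators during reduction without ever creating a $\sigma_1^{-1}$, so the $\sigma_1$'s supplied by the adjacent $A$-blocks keep their sign. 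Making this precise, by induction on $m$ while invoking the defining maximality of $x \wedge M_A$ and $x \wedge M_B$ to control the reduction, is the main obstacle: the alternating decomposition is built from right-divisibility ($\succcurlyeq$), whereas $<_D$ is invariant only under left multiplication, so there is no free ``subword implies order'' step and the sign of the surviving $\sigma_1$-handle must be tracked by hand. I also expect to need the lexicographic strengthening of (1), comparing $B_m$, then $A_m$, then $B_{m-1}$, and so on.

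Granting (1), the upper bound of (2), namely $x <_D \Delta^{2m}$, follows by induction on $m$: peeling the leading pair $B_m A_m$ leaves a braid $x''$ with $\ell_{\mathcal{A}}(x'') = m-1$, and since $\Delta^{2(m-1)}$ is central, left-invariance reduces the claim to the base estimate that any braid of alternating length at most one is $<_D \Delta^2$ (again a handle-reduction computation of the kind above), applied to $B_m A_m$. Parts (3) and the lower bound of (2) I would then establish together, carrying the sandwich through the induction. Comparing $x$ with the equal-length braid $\Delta^{2(m-1)}$ via the lexicographic form of (1) gives a trichotomy: if some leading block of $x$ strictly exceeds the corresponding block of $\Delta^{2(m-1)}$ then $x >_D \Delta^{2(m-1)} >_D \Delta^{2m-4}$ and the lower bound is immediate; otherwise $x \le_D \Delta^{2(m-1)} = \Delta^{2m-2}$, which is the hypothesis of (3). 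In that case the lexicographic comparison forbids any block of $x$ from exceeding the threshold block of $\Delta^{2(m-1)}$, while the lower bound carried along in the induction forbids any upper block from being strictly smaller (which would force $x <_D \Delta^{2m-4}$); squeezed between these, every block above $B_0$ is pinned to that of $\Delta^{2(m-1)}$, which is the asserted rigid form, and the leftover comparison in the bottom block gives $B_0 <_D \Delta_B^{2m-2}$. The lower bound $\Delta^{2m-4} <_D x$ then reads off from this form: writing $x = P B_0$ with common prefix $P = \Delta^{2m-2}\Delta_B^{-(2m-2)}$ and $B_0$ positive gives $x \ge_D P$, and $P >_D \Delta^{2m-4}$ is equivalent, after cancelling the central $\Delta^{2m-2}$, to $\Delta_B^{2m-2} <_D \Delta^2$, which holds because every braid $\gamma \in \langle \sigma_2,\ldots,\sigma_{n-1}\rangle$ satisfies $\Delta^{-2} <_D \gamma <_D \Delta^2$.

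Finally the ``in particular'' is immediate once the form of $\mathcal{A}(x)$ is established: the common prefix $P$ cancels, leaving $x^{-1}\Delta^{2m-2} = B_0^{-1}\Delta_B^{2m-2} \in \langle \sigma_2,\ldots,\sigma_{n-1}\rangle$, and conjugating by $\Delta$ applies the flip $\sigma_i \mapsto \sigma_{n-i}$, carrying this element into $\langle \sigma_1,\ldots,\sigma_{n-2}\rangle = B_{n-1}$.
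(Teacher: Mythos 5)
You correctly locate ``the main obstacle,'' but you never overcome it, and that unresolved step is the entire content of the proposition. Part (1) --- together with the lexicographic strengthening you admit you will need for (2) and (3) --- is precisely the statement that the alternating decomposition computes the Dehornoy ordering from the most significant block downward. This is not a lemma that can be verified by tracking $\sigma_1$-handles by hand: it is the theorem of Burckel--Dehornoy (\cite[Proposition 5.19]{deh}, resting on \cite{bur}) and, independently, of \cite[Theorem 1.1]{i0}, whose known proofs require either a transfinite induction over ordinal-indexed trees or a curve-diagram analysis. Look at what your reduction actually produces: $x^{-1}x' = B_0^{-1}A_1^{-1}\cdots A_m^{-1}\bigl(B_m^{-1}B'_m\bigr)A'_m\cdots A'_1B'_0$, in which every $\sigma_1$ coming from the blocks $A_i^{-1}$ of $x$ occurs \emph{negatively} and every $\sigma_1$ coming from the blocks $A'_i$ of $x'$ occurs positively, while the surplus $C$ with $B'_m = CB_m$ sits in the middle, conjugated by $B_m$. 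The assertion that after handle reduction the surviving $\sigma_1$'s are all positive is a massive global cancellation statement; handle reduction is only known to terminate, and nothing in the greedy maximality defining $x\wedge M_\ast$ controls the signs that survive. Your proposed mechanism (``the surplus lives in $M_B$ and is pushed into higher generators without creating $\sigma_1^{-1}$'') is a restatement of what must be proved, not an argument for it. In effect your plan for (1) is to reprove the hard theorem from scratch, with no indication of how to avoid the difficulties that made the original proofs long.

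The paper does not attempt any such direct argument: it invokes the identification of $<_D$ with the ordering $<_+$ defined through iterated alternating decompositions (the alternating/$\Phi$-normal form), citing \cite{deh,bur,i0}, after which (1)--(3) become combinatorial checks on normal forms. Once that identification is granted, the rest of your outline is essentially sound and close to what the paper intends: the truncation property (stripping $B_mA_m$ leaves the alternating decomposition of the remainder), the count $\ell_{\mathcal{A}}(\Delta^{2N})=N+1$ making $\Delta^{2(m-1)}$ the equal-length comparison braid, the sandwich argument pinning the upper blocks of $x$ to those of $\Delta^{2(m-1)}$ in case (3), and the final observation that $x^{-1}\Delta^{2m-2}=B_0^{-1}\Delta_B^{2m-2}$ is $\sigma_1$-free, so conjugation by $\Delta$ (the flip $\sigma_i\mapsto\sigma_{n-i}$) places it in $B_{n-1}$. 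So the repair is not to complete the handle-reduction programme but to replace it by a citation of the normal-form ordering theorem, which is exactly what the paper does; as written, your proposal has a genuine gap at its core.
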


These results follow from the following facts.
Iterated use of the alternating decomposition defines a certain normal form of positive braids called the \emph{alternating normal form} or \emph{$\Phi$-normal form}. Using such normal forms, one defines a total ordering $<_{+}$ on $B_{n}^{+}$.
A Theorem of Dehornoy \cite[Proposition 5.19]{deh} based on Burckel's result \cite{bur} or Theorem of the author \cite[Theorem 1.1]{i0} shows that $<_{+}$ coincides with the Dehornoy ordering $<_{D}$. Therefore the alternating decomposition provides another combinatorial formulation of the Dehornoy ordering. (Actually, using a variant of the alternating decomposition, one obtains a combinatorial description of a finite Thurston type ordering, a geometric generalization of the Dehornoy ordering \cite{i0}.)

All the statements of Proposition \ref{proposition:D-order} are easily checked by identifying the Dehornoy ordering $<_{D}$ with the ordering $<_{+}$ based on the alternating decompositions. See \cite{deh,i0} or \cite[Chapter VII]{ddrw} for more precise relationships between the Dehornoy ordering and the alternating decompositions.

We also use the following fact. 
\begin{lemma}
\label{lemma:ind}
 If an $n$-braid $\beta \in B_{n-1}\subsetneq B_{n}$ satisfies $\Delta_{A}^{4} <_{D} \beta$, then for $N>0$, 
\[ \Delta^{2N} <_{D} \beta(\Delta \beta \Delta^{-1} \beta)^{N}\]
holds.
\end{lemma}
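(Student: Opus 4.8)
The plan is to first use the centrality of $\Delta^2$ to collapse the whole product into a single power, and then to reduce the asserted inequality to a statement amenable to the alternating decomposition. Set $u=\Delta^{-1}\beta$, so that $\beta=\Delta u$ and, since $\Delta^2$ is central, $\Delta\beta\Delta^{-1}=\Delta^2 u\Delta^{-1}=u\Delta$. Hence one period satisfies $(\Delta\beta\Delta^{-1})\beta=u\Delta\cdot\Delta u=\Delta^2u^2$, and telescoping gives the clean identity
\[ \beta(\Delta\beta\Delta^{-1}\beta)^N=\Delta u\,(\Delta^2u^2)^N=\Delta^{2N+1}u^{2N+1}. \]
Because $<_D$ is left-invariant and $\Delta^{2N}$ is central, the target $\Delta^{2N}<_D\Delta^{2N+1}u^{2N+1}$ is equivalent to $\Delta u^{2N+1}>_D1$, that is, to
\[ \beta(\Delta^{-1}\beta)^{2N}>_D1. \]
This is the inequality I would actually prove.

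The tempting move is to induct via $\beta(\Delta^{-1}\beta)^{2N}=(\beta\Delta^{-1})\cdot\beta(\Delta^{-1}\beta)^{2N-1}$ and push the previous estimate through by left-invariance. This fails, and it is instructive to see why. One computes $\Delta^{-2}(\Delta\beta\Delta^{-1}\beta)=(\Delta^{-1}\beta)^2<_D1$, since $\Delta^{-1}\beta<_D1$ (every element of $B_{n-1}$ lies below $\Delta$); thus the natural period is strictly below $\Delta^2$, and repeated left multiplication by it drags the estimate the wrong way. In other words the statement is genuinely non-Archimedean: the powers $u^{2N+1}$ \emph{decrease} in $<_D$ yet must never fall below $\Delta^{-1}$, so the conclusion can only be captured by the finer combinatorics of the ordering and not by its order axioms alone.

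Accordingly I would argue through the alternating decomposition, using the identification $<_D\,=\,<_+$ recalled after Proposition~\ref{proposition:D-order}. The plan is an induction on $N$ showing that multiplying by one period $(\Delta\beta\Delta^{-1}\beta)$ raises the Dehornoy floor by one; concretely, that the alternating (i.e. $\Phi$-) normal form of $\beta(\Delta^{-1}\beta)^{2N}$ stays $\Phi$-positive. Here Proposition~\ref{proposition:D-order}(1) is the comparison tool, since it is enough to keep the top $B$-block on the correct side, while Proposition~\ref{proposition:D-order}(3) is the engine of the induction: it strips off the contributed $\Delta^2$ and conjugates the remainder back into $B_{n-1}\subsetneq B_n$, so that the hypothesis $\Delta_A^4<_D\beta$ is available again at the next stage. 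The role of the exponent $4$ is quantitative: in each step the $\Delta^{-1}$ factors erode the leading block by a bounded amount — in the model case $B_3$, where $\beta=\sigma_1^{\,k}$ and $\beta(\Delta^{-1}\beta)^{2N}$ handle-reduces to
\[ \sigma_1^{\,k-1}\sigma_2^{-1}\sigma_1^{\,k-2}\sigma_2^{-1}\cdots\sigma_2^{-1}\sigma_1^{\,k-1}, \]
each interior $\sigma_1$-block dropping by at most two — and $\Delta_A^4<_D\beta$ guarantees enough surplus $\Delta_A$-content that the leading block never becomes non-positive.

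The main obstacle is exactly this bookkeeping on the leading block: because the recursion is not monotone for $<_D$, one must follow the alternating normal form through each period and verify, via Proposition~\ref{proposition:D-order}(1) and (3), that the floor increments cleanly and that the surplus furnished by $\Delta_A^4$ is consumed at a controlled rate. A secondary point is that $\beta$ need not be a positive braid, so the alternating-decomposition statements, phrased for $B_n^+$, must be applied to the $\Phi$-normal form of a general braid of the shape $\Delta^{-2p}x$ with $x\in B_n^+$; this is routine but has to be set up before the induction can run.
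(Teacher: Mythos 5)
Your reduction is correct and clean: writing $u=\Delta^{-1}\beta$ and using centrality of $\Delta^{2}$, the identity $\beta(\Delta\beta\Delta^{-1}\beta)^{N}=\Delta^{2N+1}u^{2N+1}$ holds, and by left-invariance the lemma is indeed equivalent to $\beta(\Delta^{-1}\beta)^{2N}>_{D}1$. Your diagnosis that this cannot follow from the order axioms alone (since the period $\Delta^{-2}(\Delta\beta\Delta^{-1}\beta)=(\Delta^{-1}\beta)^{2}$ is $<_{D}1$) is also a genuinely useful observation. But after this point the proposal stops being a proof: the inequality $\beta(\Delta^{-1}\beta)^{2N}>_{D}1$ is exactly the hard content of Lemma \ref{lemma:ind}, and what you offer for it is a plan, not an argument. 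You say one should induct on $N$, use Proposition \ref{proposition:D-order}(1) to compare leading blocks and Proposition \ref{proposition:D-order}(3) to strip a $\Delta^{2}$ and return to $B_{n-1}$, and that $\Delta_{A}^{4}$ provides "enough surplus" — but none of this is carried out, and you yourself flag the two missing pieces: the bookkeeping showing the leading block survives each period, and the fact that Proposition \ref{proposition:D-order} is stated for positive braids while $\beta(\Delta^{-1}\beta)^{2N}$ is not one. Neither issue is routine. In particular, Proposition \ref{proposition:D-order}(3) has a hypothesis of the specific form $x<_{D}\Delta^{2\ell_{\mathcal{A}}(x)-2}$; it is not established that the braids arising in your recursion satisfy it, nor that the "erosion by at most two per step" seen in your $B_{3}$ model persists for general $n$ and general non-positive $\beta$. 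The $B_{3}$ computation is an illustration, not an induction step.

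It is worth knowing that the paper proves this lemma by a completely different and much shorter route: it invokes the geometric (curve diagram) characterization of $<_{D}$ from Fenn--Greene--Rolfsen--Rourke--Wiest and checks the inequality pictorially (Figure \ref{fig:proof}). Geometrically, the hypothesis $\Delta_{A}^{4}<_{D}\beta$ with $\beta\in B_{n-1}$ guarantees that $\beta$ twists the first $n-1$ punctures enough that each factor $\Delta\beta\Delta^{-1}\beta$ wraps the curve diagram one more full turn around all $n$ punctures, which is precisely the statement $\Delta^{2N}<_{D}\beta(\Delta\beta\Delta^{-1}\beta)^{N}$. If you want to complete your combinatorial route, you would need to prove a quantitative version of your "floor increments by one per period" claim — for instance, an explicit description of the alternating normal form of $\Delta^{2}(\Delta^{-1}\beta)^{2}$ in terms of that of $\beta$ — and that is a substantial piece of work the proposal does not contain. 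As it stands, the proposal has a genuine gap at its core step.
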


This assertion is easily checked by using a geometric (curve diagram) definition of the Dehornoy ordering given in \cite{fgrrw} (see \cite[Chapter X]{ddrw} as well), as we pictorially show in Figure \ref{fig:proof}.

\begin{figure}[htbp]
 \begin{center}
\includegraphics*[width=125mm]{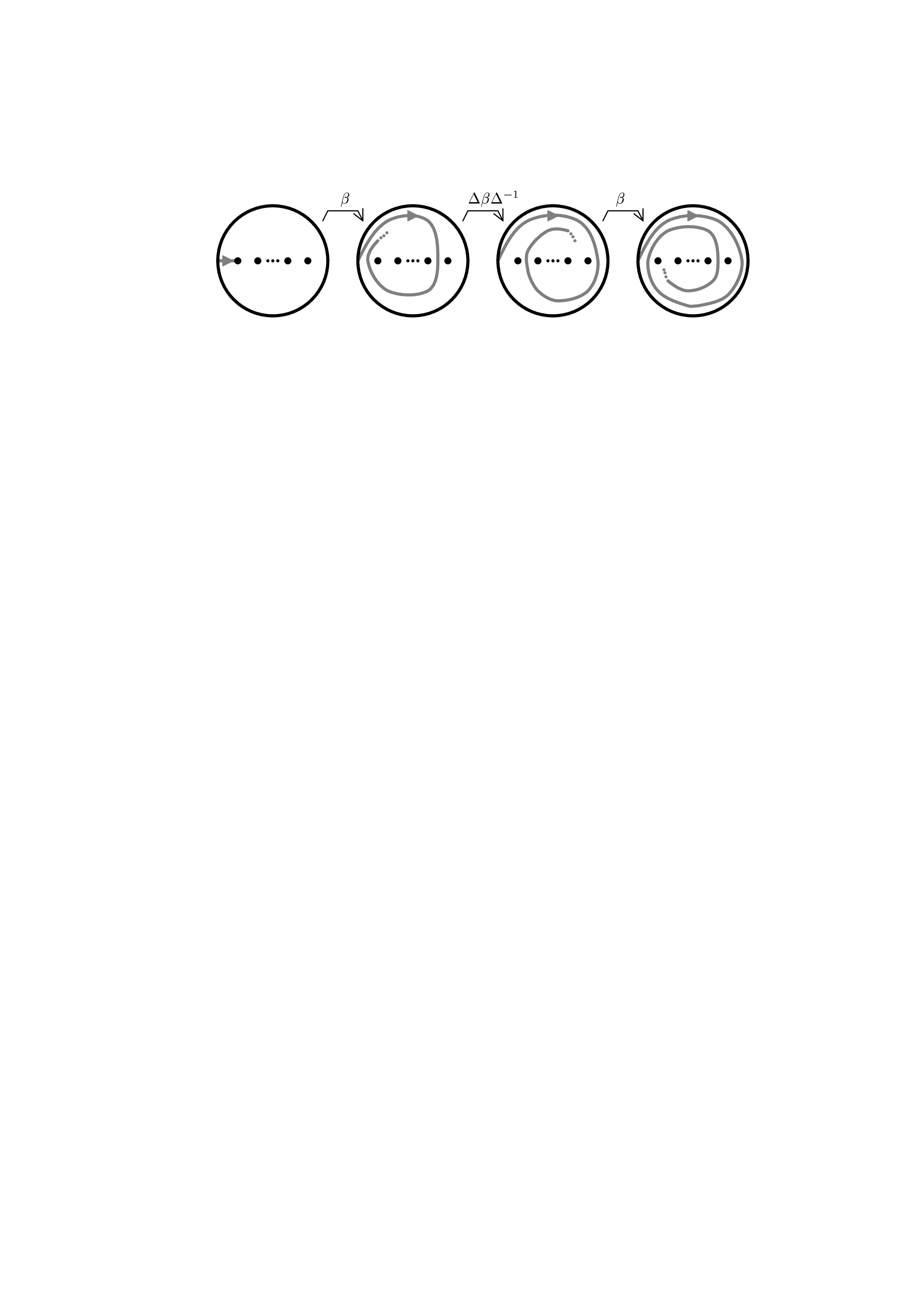}
\caption{Pictorial proof of Lemma \ref{lemma:ind}.}
\label{fig:proof}
\end{center}
\end{figure}
Now we are ready to prove Theorem \ref{theorem:unbounded}.

\begin{proof}[Proof of Theorem \ref{theorem:unbounded}]

First we show the unboundedness of $H$ by induction on $n$. The case $n=2$ is obvious, so we assume $n>2$.

Let $\gamma \in H$ be a non-trivial element of a normal subgroup $H$. 
By taking $\gamma^{-1}$ if necessary, we may assume that $\gamma <_{D} 1$.
Take $N>0$ so that $\Delta^{2N} \gamma \in B_{n}^{+}$ and let
$\mathcal{A}(\Delta^{2N} \gamma)=B_{m}A_{m}\cdots B_{1}A_{1}B_{0}$ be the alternating decomposition of $\Delta^{2N} \gamma$. 

Put $\gamma_{0}= B_{m}^{-1}\gamma B_{m} \in H$.
Then the alternating decomposition of $\Delta^{2N}\gamma_{0}$ is given by
\[ \mathcal{A}(\Delta^{2N} \gamma_{0})=A_{m}\cdots B_{1}A_{1}(B_{0}B_{m}).\]
By Proposition \ref{proposition:D-order} (1), $\Delta^{2N}\gamma_{0} <_{D} \Delta^{2N}\gamma$, so $\gamma_{0}<_{D} \gamma<_{D} 1$.
Moreover, by Proposition \ref{proposition:D-order} (2),
\[ \Delta^{2m-4} <_{D} \Delta^{2N} \gamma_{0} <_{D} \Delta^{2m}. \]
Since $\gamma_{0} <_{D} 1$, we have $m-N \leq 1$.

Assume that $m-N=1$. Then $\gamma_{0} <_{D} 1$ if and only if $A_{m}\cdots B_{1}A_{1}(B_{0}B_{m}) <_{D} \Delta^{2m-2}$. By Proposition \ref{proposition:D-order} (3), this implies that $\Delta^{-1}\gamma_{0}\Delta \in B_{n-1} \subsetneq B_{n}$.
Therefore by induction there exists $\gamma_{1} \in H$ such that 
$\gamma_{1} \in B_{n-1} \subsetneq B_{n}$ with $\Delta_{A}^{4} <_{D} \gamma_{1}$. Lemma \ref{lemma:ind} shows that $\gamma_{1} (\Delta \gamma_{1}\Delta^{-1}\gamma_{1})^{N} \in H$ satisfies
\[ \Delta^{2N} <_{D} \gamma_{1} (\Delta \gamma_{1}\Delta^{-1}\gamma_{1})^{N} \]
for any $N>0$ hence $H$ is unbounded.

Now we assume that $m-N \leq 0$. Let us put $\gamma_{+} = \Delta^{2N}\gamma_{0} \in B_{n}^{+}$ and for $i\geq 0$, let
\[ \gamma_{i}= [ \gamma_{+}(\Delta^{-1}\gamma_{+}\Delta)]^{i}\gamma_{+}. \]
Then $\ell_{\mathcal{A}}(\gamma_{i}) \leq (2i+1)m -i$, hence by Proposition \ref{proposition:D-order} (2) we conclude
\[ \Delta^{-2(2i+1)N}\gamma_{i} <_{D} \Delta^{2(2i+1)(m-N)-2i} <_{D} \Delta^{-2i}.\]
By definition $\Delta^{-2(2i+1)N}\gamma_{i} \in H$ hence we conclude that $H$ is unbounded.

It remains to show that one can find an arbitrary large pseudo-Anosov braid in $H$. Recall that every non-trivial non-central normal subgroup of $B_{n}$ contains a pseudo-Anosov braid $\beta_{\sf pA} >_{D} 1$ \cite[Lemma 2.5]{lo}: for a non-central element $\beta \in H$, $\theta^{N} \beta \theta^{-N} \beta^{-1} \in H$ is pseudo-Anosov if $N$ is sufficiently large and $\theta \in B_{n}$ is pseudo-Anosov which does not commute with $\beta$.

For a given $\alpha \in B_{n}$, take $\beta_{0}\in H$ so that $\beta_{0}>\alpha$. 
A theorem of Papadopolus \cite{pap} says that $\beta_{0}\beta_{\sf pA}^{N}$ is pseudo-Anosov for sufficiently large $N$. Since $\beta_{0}\beta_{\sf pA}^{N} >_{D}\beta_{0} >_{D} \alpha$, $\beta_{0}\beta_{\sf pA}^{N} \in H$ is a desired pseudo-Anosov element of $H$.
\end{proof}

\end{document}